\numberwithin{equation}{section}
\def\a{\alpha}
\def\C{\mathbb{C}}
\def\R{\mathbb{R}}
\def\N{\mathbb{N}}
\def\S{{\mathbb{S}}}
\def\M{{\mathbb{M}}}
\def\H{{\mathbb{H}}}
\newtheorem{thm}{Theorem}[section]
\newtheorem{lemma}[thm]{Lemma}
\begin{document}

\title[Duality Theorem]{A DUALITY THEOREM FOR CERTAIN FOCK SPACES}

\author{Jocelyn Gonessa}
\address{Gonessa: Universit\'e de Bangui, D\'epartement 
de math\'ematiques et Informatique, BP.908 Bangui-R\'epublique Centrafricaine}
\email{gonessa.jocelyn@gmail.com}
\email{jocelyn@aims.ac.za}
\subjclass[2000]{Primary  47B35, 32A36, 30H25, 30H30, 46B70, 46M35}

\keywords{Bergman projection, Bergman spaces, Bloch space}
\thanks{Gonessa was supported by \textit{African Institute for Mathematical Sciences (in South Africa) and Agence Universitaire de la Francophonie}}

\begin{abstract}
We characterise functions for the dual spaces of entire functions $f$ such that $fe^{-\phi}\in L^p(\C^n,\rho^{-2}dA)$, $0<p\leq 1$, where $\phi$ is a subharmonic weight and $\rho^{-2}$ is a positive function called under certain conditions regularised version of Laplacian $\Delta\phi$, as described in \cite{C}.  
\end{abstract}

\maketitle

\section{Introduction and main result.}
Let a subharmonic function $\phi$ be given. The spaces we deal with are follows:
$$F^p_\phi =\left\{f\in{\C^n}:\|f\|^p_{F^p_\phi}=\int_{\C^n}|f(z)e^{-\phi(z)}|^p\rho^{-2}(z)dA(z)<\infty \right\},\quad 1\leq p<\infty$$
$$F^\infty_\phi =\left\{f\in{\C^n}:\|f\|_{F^\infty_\phi}=\sup_{z\in\C^n}|f(z)e^{-\phi(z)}|<\infty \right\}$$
where $\rho^{-2}$ is a positive function given. Here $dA$ is the Lebesgue measure on  $\C^n$  normalized so that the volume of the unit ball is equal to one.

Our objective in this work is to prove  for certain $\phi$ and $\rho$ the dual of $F^p_\phi$ is $F^\infty_\phi$, $0<p\leq 1$. More precisely we study the following  particular case. 
\begin{equation}
	\phi(z)=\frac{sN_*(z)-\log|z\bullet z|}{2},\qquad s>0
\end{equation} 
\begin{equation}
	\rho(z)=\sqrt{|z\bullet z|}
\end{equation}
where $$N_*(z)=\sqrt{\|z\|^2+|z\bullet z|}$$ with $\|z\|^2=z\bullet\bar z$ and  
$z\bullet w=z_1w_1+\cdots+z_nw_n$ for all $z=(z_1,\cdots,z_n)$, $w=(w_1,\cdots,w_n)\in\C^n$. 
Namely, $N_*/\sqrt{2}$ is a norm introduced by
Hahn and Pflug, see \cite{HP}. It was shown to be the smallest norm in $\C^n$ that
extends the euclidean norm in $\R^n$ in the following sense. If $N$ is any complex norm in $\C^n$ such that 
$N^2(x)=\|x\|^2=\sum_{j=1}^n x_j^2$ for $x\in\R^n$ and $N(z)=\|z\|$ for $z\in\C^n$, then $N_*(z)/\sqrt{2}\leq N(z)$
for $z\in \C^n$. Moreover, this
norm was shown to be of interest in the study of several problems related to
proper holomorphic mappings and the Bergman kernel, see \cite{G,GY,MY, M1,M2} for example.
For any $s>0$ and $0<p\leq\infty$ we let $L^p_s$ denote the space of Lebesgue mesurable functions $f$ on $\C^n$ 
such that $fe^{-\phi}\in L^p(\C^n,\,\,\rho^{-2}dA)$. In this paper we are going to call $F_s^p$ the Fock spaces $F_\phi^p$, for no particular reason than use notations in \cite{MY}. So we will do the following identifications. $\|\cdot\|_{F^p_\phi}:=\|\cdot\|_{p,s}$ and  $\|\cdot\|_{F^\infty_\phi}:=\|\cdot\|_{\infty,s}$. Let us remark that if $n = 1$ then the spaces $F_s^p$ consist of all entire functions
$f$ such that $f(z)e^{-s|z|^2}\in L^p(\C, |z|^{p-2}dA)$. These are the classical Fock spaces.

The purpose of this paper is to describe the bounded linear functionals
on Fock spaces $F_s^p$ for every $0<p\leq 1$.The answer 
is well known when $1 < p < \infty$ and the problem is solved by using a special pairing, see \cite{G}. The arguments previously provided in \cite{G} and \cite{Z1} play the key role in the present work. However the case $\phi$ is a (nonharmonic) subharmonic function, whose Laplacian satisfies $0<m\leq \Delta \phi(z)\leq M$ ($m$, $M$ positive constants) and $\rho(z)=1$ can be proved (used classical method in \cite{Z1} and Lemma 1 in \cite{OS}). That is the dual of $F_\phi^p$ is $F_\phi^\infty$ for any $0<p\leq 1$.   
 Even better if $\phi$ is a (nonharmonic) subharmonic function, whose $\mu=\Delta \phi$ is a doubling measure and $\rho^{-2}$ is a regularised version of $\mu$, i.e. the positive radius such that $\mu(D(z,\rho(z)))=1$, and $\rho\geq 1$ then  
 the dual of $F_\phi^p$ is $F_\phi^\infty$ for any $0<p\leq 1$ (used classical method in \cite{Z1} and Lemma 19 (a) in \cite{MMO}). Here $D(z,\rho(z))$ is a ball centered in $z$ of radius $\rho(z)$. Our main result is the following.



\newtheorem*{thA}{\bf Theorem A}  
\begin{thA}{\it
Suppose $s>0$ and $0<p\leq 1$. Then the dual space of $F_s^p$ can be identified with $F_s^\infty$.
More precisely, there is a bounded bilinear complex form $L$ on $F_s^p\times F_s^\infty$ such that 
every bounded linear functional on $F_s^p$ has the following form
$$f\mapsto L_g(f):=L(f,g)$$
for some unique $g\in F_s^\infty$. Futhermore the norm of the linear functional on $F_s^p$ 
is equivalent to the norm of $g$ in $F_s^\infty$. Namely, there exits a constant $C$ such that
$$C^{-1}\|g\|_{\infty,s}\leq\|L_g\|\leq C\|g\|_{\infty,s}$$
for all $g\in F_s^\infty$.
}
\end{thA}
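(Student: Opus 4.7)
The plan is to follow Zhu's strategy for the classical Fock spaces (see \cite{Z1}), as adapted by the present author in \cite{G} for the range $1<p<\infty$, combining a reproducing pairing with an atomic decomposition of $F_s^p$. Throughout I would work with the sesquilinear form
$$
L(f,g):=\int_{\C^n} f(z)\,\overline{g(z)}\,e^{-2\phi(z)}\rho^{-2}(z)\,dA(z),
$$
interpreted via the $F_s^2$ reproducing formula so that $L(f,K(\cdot,w))=f(w)$. Three ingredients should suffice: a pointwise growth estimate for $f\in F_s^p$ of the shape $|f(z)|e^{-\phi(z)}\lesssim \rho(z)^{-2/p}\|f\|_{p,s}$, obtained by applying a mean-value inequality to the subharmonic function $|f|^pe^{-p\phi}$ on the ball $D(z,\rho(z))$; quantitative pointwise and $L^1$-type estimates on the reproducing kernel $K(z,w)$ of $F_s^2$ as given in \cite{G}; and an atomic decomposition of $F_s^p$ on a $\rho$-separated lattice.

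For the upper bound $\|L_g\|\le C\|g\|_{\infty,s}$, I would combine the growth estimate with Fubini and a Schur-type bound $\int|K(z,w)|e^{-2\phi(w)}\rho^{-2}(w)\,dA(w)\lesssim e^{-\phi(z)}$. The transfer of $L^\infty$ information about $g$ into $L^p$ control of $f$ is then carried out by discretising the integral over a lattice of $\rho$-balls and using the elementary inequality $(\sum|a_k|)^p\le\sum|a_k|^p$ available precisely because $0<p\le 1$.

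The converse, which is the technical heart of the proof, rests on an atomic decomposition
$$
f(z)=\sum_k\lambda_k\,\frac{K(z,a_k)}{\|K(\cdot,a_k)\|_{p,s}},\qquad \sum_k|\lambda_k|^p\le C\|f\|_{p,s}^p,
$$
on a sufficiently dense $\rho$-separated sequence $\{a_k\}$. Given a bounded linear functional $\Lambda$ on $F_s^p$, one sets $g(w):=\Lambda(K(\cdot,w))$, which is entire, and uses $\|K(\cdot,w)\|_{p,s}\lesssim e^{\phi(w)}$ to conclude $|g(w)|\le\|\Lambda\|\,\|K(\cdot,w)\|_{p,s}\lesssim \|\Lambda\|\,e^{\phi(w)}$, so that $g\in F_s^\infty$ with $\|g\|_{\infty,s}\le C\|\Lambda\|$. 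The identity $\Lambda(f)=L(f,g)$ is checked first on the atoms (where it reduces to the reproducing formula) and then extended via the atomic sum; uniqueness of $g$ follows from the density of the linear span of the kernels in $F_s^p$.

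The main obstacle is the verification of both the reproducing-kernel estimates and the atomic decomposition for this particular anisotropic weight $\phi(z)=(sN_*(z)-\log|z\bullet z|)/2$ with $\rho(z)=\sqrt{|z\bullet z|}$. The weight degenerates along the isotropic cone $\{z\bullet z=0\}$, so $\Delta\phi$ is not comparable to a doubling measure in the standard sense and the general machinery of \cite{MMO,OS} does not apply directly; the correct underlying geometry is instead that of the Hahn--Pflug norm $N_*$ and the balls $D(z,\rho(z))$. Once the kernel estimates and the sampling/interpolation sequences of \cite{G} are granted, the extension from $1<p<\infty$ to $0<p\le 1$ should proceed by the standard atomic argument.
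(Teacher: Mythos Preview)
Your plan and the paper's proof diverge at the very first step. You propose to work directly on $\C^n$ with the degenerate weight $\phi(z)=(sN_*(z)-\log|z\bullet z|)/2$, whereas the paper never attempts this: it lifts the problem to the nonsingular cone $\H\subset\C^{n+1}$ via the isometry $T_p$ of Lemma~\ref{lemma1}, where the weight becomes the pure Gaussian $e^{-s\|z\|^2/2}$ and the degeneracy along $\{z\bullet z=0\}$ disappears. Theorem~A is then an immediate corollary of Theorem~B (duality for $\mathcal A_s^p(\H)$), and the bilinear form is $L(f,g)=\int_\H T_pf\,\overline{Tg}\,w_{2,s}$, not an integral over $\C^n$. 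Even on $\H$ the paper avoids atomic decompositions entirely: the upper bound $\|L_g\|\lesssim\|g\|_{\infty,s}$ comes from the continuous inclusion $\mathcal E_s^p(\H)\subset\mathcal E_s^1(\H)$ (Lemma~\ref{lemma4}), and the converse is obtained by setting $g(w)=G(\tilde K_s(\cdot,w))$, bounding $\|\tilde K_s(\cdot,w)\|_{p,s}$ by a direct computation (Lemma~\ref{lemma2}), and invoking density of finite linear combinations of kernels.

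The obstacle you yourself flag is precisely the gap in your proposal. An atomic decomposition on a $\rho$-separated lattice requires sampling/interpolation theory for this weight, and since $\rho(z)=\sqrt{|z\bullet z|}$ vanishes on an $(n-1)$-complex-dimensional variety, neither the doubling hypotheses of \cite{MMO} nor the bounded-Laplacian hypotheses of \cite{OS} are available; the paper \cite{G} treats $1<p<\infty$ by Schur/H\"older arguments and does not supply the atomic machinery you would need for $0<p\le1$. So the phrase ``once the kernel estimates and the sampling/interpolation sequences of \cite{G} are granted'' is granting exactly the missing content. The paper's lift to $\H$ is not a cosmetic device but the mechanism that replaces that missing theory: on $\H$ the pointwise estimate (Lemma~\ref{lemmapw}) follows from a mean-value inequality over $D_{n+1}(z)\cap\H$ combined with Lemma~\ref{lemmapw1}, and the kernel bound is computed explicitly from the known formula for $K_{s,\H}$.
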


The problem of describing the bounded linear functionals on $L^p$, $0 < p \leq 1$, has been studied 
in several papers. In the case of
Hardy spaces, the problem started in \cite{R} and was further pursued in \cite{DRS,F,HM}. For Bergman spaces, 
the problem was studied in \cite{CR,S,Z2,Z3}. Namely,
it was shown that with the classical integral pairing all the classical Fock
spaces have the same dual space when $0 < p\leq 1$. That is the space of the
bounded holomorphic functionnals. In this note we use new tools under a special pairing to prove the same result for
Fock spaces $F_s^p$. 

This work began when I was visiting the
\textit{African Institute for Mathematics Sciences} in Cameroon. I wish to thank the Classical Analysis group managed by Professor David B\'ekoll\'e, for the full discussion.
We achieved the work when I was visiting the \textit{African Institute for Mathematics Sciences} in South Africa.
I wish to thank the mathematics group, and the Directors Professor Mama Foupouagnigni and Professor Barry Green in particular, for very nice visits.

Our starting point are some preliminaries results that we will need in the proof of the main theorem.
\section{Preliminaries}
Let $ n\geq 2$ and consider the nonsingular cone
$$\H:=\{z\in\C^{n+1}: \ z_1^2+\cdots+z_{n+1}^2=0,\,\,z\neq 0\}.$$
This  is the orbit of the vector $(1, i, 0,  \ldots, 0)$ under the $SO(n+1, \C)$-action on $\C^{n+1}.$ 
It is well-known that $\H$ can be identified with  the cotangent bundle of the unit sphere $\S^n$ in 
the $n-$dimensional sphere in $\R^{n+1}$ minus its zero section. It was proved in \cite{OPY} that there 
is a unique (up to a multiplicative constant) $SO(n+1,\,\,\C)-$invariant holomorphic form $\alpha$ on $\H$. 
The restriction of this form to $\H\cap (\C\backslash\{0\})^{n+1}$ is given by
$$\alpha(z)=\sum_{j=1}^{n+1}\frac{(-1)^{j-1}}{z_j}dz_1\wedge\cdots\wedge
\widehat{dz_j}\wedge\cdots\wedge dz_{n+1}.$$

The orthogonal group $O(n + 1,\R)$ acts transitively on the boundary $\M$
of the unit ball in $\H$. Thus there is a unique $O(n + 1,\R)$-invariant probability
measure $\mu$ on $\M$. This measure is induced by the Haar probability
measure of $O(n + 1,\R)$. Following \cite{MY} (see Lemma 2.1 page 506), we have for any
$\C^n$ function $f$ on $\H$ that 
\begin{equation}\label{intMY}
	\int_\H f(z)\a(z)\wedge\bar\a(z)=m_n\int^\infty_0 r^{2n-3}\int_{\partial\M}f(r\xi)d\mu(\xi)dr
\end{equation}
provided that the integrals makes sense. Here
$$m_n=2(n-1)\int_{\{z\in\H:\|z\|<1\}} \a(z)\wedge\overline{\a(z)}$$
For each $s > 0$ and $0 < p<\infty$, let $ L^p_s (\H)$ denote the Lebesgue spaces of
all functions $f$ on $\H$ such as $f\in L^p(\H,w_{s,p})$. Here $w_{s,p}$ is the Gaussian volume form defined on $\H$ as
$$w_{s,p}(z)=\frac{(sp)^{n-1}e^{-sp\|z\|^2/2}}{2^{n-2}m_n(n-2)!},\,\,z\in\H.$$
In the following we adopt some notations. For any $f \in L^p_ s (\H)$ we
write $$\|f\|_{ L^p_s (\H)}=\left(\int_\H |f(z)|^p w_{s,p}(z)\right)^\frac{1}{p},\,\,0<p<\infty$$
and $$\|f\|_{ L^\infty_s (\H)}=\sup_{z\in \H} |f(z)|e^{-s\|z\|^2/2}$$



The weighted Bergman space  $\mathcal{A}^p_s(\H)$ is the closed subspace of $L_s^p(\H)$ consisting of 
holomorphic functions. When $p=2$, the  orthogonal projection $P_s$ from $L_s^2(\H)$ onto $\mathcal{A}^2_s(\H)$ is called 
the weighted Bergman projection.
It is well-known that $P_s$ is the integral operator on $L_s^2(\H)$ given by the formula
$$P_s f(z)=\int_{\H}K_{s,\H}(z,\,\,w)f(w)w_{s,2}(w)$$
where 
$$K_{s,\H}(z, w)=(-1)^{n(n+1)/2}(2i)^n(1+\frac{2s}{n-1}z\bullet\bar w)e^{z\bullet\bar w}$$
 is the reproducing kernel on $\mathcal{A}^2_s(\H)$, see \cite{GY}. This is the weighted Bergman kernel. 
 Let an operator $T_p$ be defined as follows
$$T_pf(z)=C(p)^{1/p}z_{n+1}f(z_1,\cdots,z_n),\qquad z=(z_1,\cdots,z_{n+1})\in\H$$
where $C(p)=\frac{2^{n-3}m_n(n-2)!}{(sp)^{n-1}(n+1)^2}$. As in \cite{MY} (see page 163) the operator $T_p$ will play a key role in our proof. 
\begin{lemma}\label{lemma1}
For any $s>0$ and $0<p<\infty$, the operator $T_p$ is an isometry from $L^p_s(\C^n)$ into $L^p_s(\H)$. 
More precisely, we have
\begin{equation}\label{AshAn0}
	\|T_pf\|_{L^p_s(\H)}=\|f\|_{p,s}
\end{equation}
In addition, the image $\mathcal{E}^p_s(\H)$ of $\mathcal A_s^p(\H)$ under $T_p$ is a closed proper subspace 
of $\mathcal A_s^p(\H)$ and $T_p$ is a unitary operator from $\mathcal A_s^p(\H)$ onto $\mathcal{E}^p_s(\H)$. 
\end{lemma}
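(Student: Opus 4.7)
The lemma has two parts. My strategy is to prove the isometry \eqref{AshAn0} by an explicit change of variables from $\H$ to $\C^n$ via the projection on the first $n$ coordinates, after which the remaining claims follow easily.

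Consider the projection $\pi:\H\to\C^n$, $z=(z_1,\ldots,z_{n+1})\mapsto w:=(z_1,\ldots,z_n)$. The defining equation of $\H$ reads $z_{n+1}^2=-w\bullet w$, so $\pi$ is two-to-one off the isotropic cone $\{w\bullet w=0\}$ (a set of measure zero), with the two preimages corresponding to $z_{n+1}\leftrightarrow -z_{n+1}$. On each preimage one has $|z_{n+1}|^2=|w\bullet w|=\rho(w)^2$ and $\|z\|^2=\|w\|^2+|z_{n+1}|^2=N_*(w)^2$, so the scalar factor $|z_{n+1}|^p e^{-sp\|z\|^2/2}$ that appears in $\|T_p f\|_{L^p_s(\H)}^p$ descends to $\rho(w)^p e^{-sp N_*(w)^2/2}$ on $\C^n$.

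The only nontrivial computation is the pullback of $\alpha\wedge\bar\alpha$. Differentiating the defining equation of $\H$ yields the relation $z_{n+1}\,dz_{n+1}=-\sum_{j\le n} z_j\,dz_j$; using this to eliminate $dz_{n+1}$ from each of the $n+1$ summands of $\alpha$, a short calculation shows that all summands collapse to a single expression, giving
$$\alpha(z)=\frac{(-1)^n(n+1)}{z_{n+1}}\,dz_1\wedge\cdots\wedge dz_n$$
on the chart $\{z_{n+1}\neq 0\}$. Consequently $\pi_{\ast}(\alpha\wedge\bar\alpha)$ equals a universal constant (depending only on the normalization of $dA$) times $(n+1)^2|w\bullet w|^{-1}dA(w)$, with the factor $2$ from the two sheets of $\pi$ absorbed into the universal constant. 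Substituting, the weight appearing in $\|T_p f\|_{L^p_s(\H)}^p$ becomes $|w\bullet w|^{p/2-1}e^{-spN_*(w)^2/2}dA(w)$, which matches the weight in $\|f\|_{p,s}^p$ by the definitions of $\phi$ and $\rho$. The constant $C(p)=\tfrac{2^{n-3}m_n(n-2)!}{(sp)^{n-1}(n+1)^2}$ in the definition of $T_p$ is precisely the one that cancels all remaining multiplicative factors $(n+1)^2$, $m_n$, $2^{n-2}(n-2)!$ and $(sp)^{n-1}$, which yields \eqref{AshAn0}.

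For the second assertion, if $f\in F^p_s$ is entire on $\C^n$, then $T_pf(z)=C(p)^{1/p}z_{n+1}f(z_1,\ldots,z_n)$ is the restriction to $\H$ of a function entire on $\C^{n+1}$, hence holomorphic on $\H$; combined with \eqref{AshAn0} this shows $T_p f\in\mathcal{A}^p_s(\H)$. Since $T_p$ is a linear isometry into a Banach space, its image $\mathcal{E}^p_s(\H)$ is automatically closed in $\mathcal{A}^p_s(\H)$, and $T_p$ is unitary onto this image. The image is proper because every element of $\mathcal{E}^p_s(\H)$ vanishes identically on the subvariety $\H\cap\{z_{n+1}=0\}$, whereas $\mathcal{A}^p_s(\H)$ contains, for instance, the constant function $1$ (whose norm is finite for $n\geq 2$ by the radial formula \eqref{intMY}). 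The main obstacle in this proof is the bookkeeping in the pullback calculation: both the closed form for $\alpha$ in the $w$-coordinates and the matching of multiplicative constants with the specific choice of $C(p)$ need to be tracked with care.
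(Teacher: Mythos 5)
The paper offers no proof of this lemma at all: it is stated as imported background, with a pointer to the references \cite{MY} and \cite{GY} (``see page 163''), so there is nothing internal to compare against. Your argument is, in substance, the standard proof from those references and it is correct: the key points --- the identities $|z_{n+1}|^2=|w\bullet w|$ and $\|z\|^2=N_*^2(w)$ on $\H$, the collapse of $\alpha$ to $(-1)^n(n+1)z_{n+1}^{-1}\,dz_1\wedge\cdots\wedge dz_n$ on the chart $\{z_{n+1}\neq 0\}$ (I checked the sign bookkeeping; each of the $n+1$ summands contributes the same term), the factor $2$ from the two sheets of the projection, and the observation that $C(p)$ is rigged to absorb $(n+1)^2$, $m_n$, $2^{n-2}(n-2)!$ and $(sp)^{n-1}$ --- all check out against the definitions of $\phi$, $\rho$ and $w_{s,p}$. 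Two small remarks. First, the second assertion of the lemma as printed is garbled ($T_p$ cannot act on $\mathcal A^p_s(\H)$; it acts on $F^p_s$), and you have silently read it the correct way; it would be worth saying so explicitly. Second, for $0<p<1$ the spaces are only quasi-Banach, so ``isometry into a Banach space'' should be ``isometry of complete metric spaces'' (with metric $\|f-g\|^p$); the conclusion that the image is closed is unaffected. Your properness argument (elements of $\mathcal E^p_s(\H)$ vanish on $\H\cap\{z_{n+1}=0\}$, while the constant $1$ lies in $\mathcal A^p_s(\H)$ by the radial formula \eqref{intMY}) is a clean and correct way to finish.
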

The following resut is a crucial ingredient to prove the main lemma of this paper.

\begin{lemma}[See \cite{MMO}]\label{lemmapw1}
	Let an integer $m$. Then for every $R>0$ there exists $A=A(R)$ such that for all $z\in\C^{n}$ $$\sup_{\zeta\in D_n(z)}\left| |\zeta|^2-|z|^2-h_z(\zeta)\right|\leq A$$
	where $h_z$ is a harmonic function in
	$D_n(z)=\{\zeta\in\C^{n}:|\zeta-z|< R\}$ with $h_z(z)=0$.
\end{lemma}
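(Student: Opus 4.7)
The plan is to produce $h_z$ explicitly via the polarization identity on $\C^n$ and observe that the remainder term is a controllable quadratic in $|\zeta - z|$.

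First, I would write $\zeta = z + (\zeta - z)$ and expand $|\zeta|^2 = \sum_{j=1}^n \zeta_j\bar\zeta_j$ to obtain the identity $|\zeta|^2 = |z|^2 + 2\Re\sum_{j=1}^n (\zeta_j - z_j)\bar z_j + |\zeta - z|^2$. This decomposes $|\zeta|^2 - |z|^2$ as a pluriharmonic cross-term plus $|\zeta - z|^2$.

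Next, I would define $h_z(\zeta) := 2\Re\sum_{j=1}^n (\zeta_j - z_j)\bar z_j$. Because this is the real part of a polynomial that is holomorphic in $\zeta$, it is pluriharmonic on all of $\C^n$ (in particular harmonic on $D_n(z)$), and it manifestly vanishes at $\zeta = z$, so it meets the requirements on $h_z$.

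Substituting yields $|\zeta|^2 - |z|^2 - h_z(\zeta) = |\zeta - z|^2$, which is strictly less than $R^2$ whenever $\zeta \in D_n(z)$. Thus the choice $A(R) = R^2$ works, uniformly in $z$.

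There is no genuine obstacle in the lemma as stated: it is essentially the polarization identity, and the integer $m$ appearing in the hypothesis seems to play no role. The real content of the cited MMO result lies in its general form, where $|z|^2$ is replaced by an arbitrary subharmonic weight $\phi$ with doubling Laplacian and $R$ by $R\rho(z)$ for the regularised radius $\rho$; there the doubling hypothesis is needed to extract a harmonic approximation of $\phi$ with bounded error. In the special case stated here, with the explicit quadratic weight $|z|^2$ and a fixed Euclidean radius, no such machinery is required.
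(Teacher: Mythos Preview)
Your argument is correct: for the explicit weight $|\zeta|^2$ the polarization identity gives the harmonic part $h_z(\zeta)=2\Re\langle \zeta-z,z\rangle$ exactly, with remainder $|\zeta-z|^2<R^2$, so $A(R)=R^2$ suffices and the bound is uniform in $z$. Your observations that the integer $m$ is a dangling hypothesis and that the full strength of the cited result in \cite{MMO} is only needed for a general subharmonic $\phi$ with doubling Laplacian are also accurate.

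As for comparison with the paper: there is nothing to compare. The paper does not prove this lemma; it simply records it with the attribution ``See \cite{MMO}'' and uses it as a black box in the proof of Lemma~\ref{lemmapw}. Your explicit computation is therefore more than what the paper supplies, and in the specific quadratic case at hand it is the natural argument.
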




\section{Intermediate results}
In this section we set out an important result of the paper.
\newtheorem*{thB}{\bf Theorem B}  
\begin{thB}\label{theob}
{\it Suppose $s>0$ and $0<p\leq 1$. Then the dual of $\mathcal A_s^p(\H)$ is $\mathcal A_s^\infty(\H)$ under the duality pairing
\begin{equation}\label{equation1}
<f,g>_s=\int_\H f(z)\overline{g(z)}e^{-s\|z\|^2}w_{s,2}(z)
\end{equation}
}
\end{thB}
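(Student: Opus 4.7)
The plan is to establish the two natural inclusions separately, following the pattern of \cite{Z1, Z2} for Bergman and Fock spaces. First, I would show that every $g\in\mathcal{A}^\infty_s(\H)$ gives a bounded linear functional on $\mathcal{A}^p_s(\H)$ through the pairing \eqref{equation1}, with operator norm $\lesssim\|g\|_{\infty,s}$. Then, conversely, starting from an abstract bounded functional $L\in(\mathcal{A}^p_s(\H))^*$, I would recover a function $g\in\mathcal{A}^\infty_s(\H)$ that represents $L$, with $\|g\|_{\infty,s}\lesssim \|L\|$.

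The forward direction will rely on a pointwise estimate of the form
\[
|f(z)|\,e^{-s\|z\|^2/2}\leq C\,\|f\|_{p,s},\qquad f\in\mathcal{A}^p_s(\H),
\]
obtained by applying a sub-mean-value inequality to $|f|^p$ on a small ball centred at $z$ and using Lemma \ref{lemmapw1} to replace the exponential weight by a harmonic factor on that ball. For $0<p\leq 1$ the key trick is to factor $|f|=|f|^p\cdot|f|^{1-p}$ in the pairing integral and apply the pointwise estimate only to the factor $|f|^{1-p}$, combined with $|g(z)|e^{-s\|z\|^2/2}\leq\|g\|_{\infty,s}$. This reduces the estimate of $|\langle f,g\rangle_s|$ to an integral of $|f|^p$ against a Gaussian comparable to $\|f\|_{p,s}^p$, whence $|L_g(f)|\leq C\|g\|_{\infty,s}\|f\|_{p,s}$ follows.

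For the reverse direction I would define the candidate $g$ by testing $L$ against a suitable normalization of the reproducing kernel $K_{s,\H}(\cdot,w)$, chosen so that the extra factor $e^{-s\|z\|^2}$ in \eqref{equation1} is absorbed and $g$ comes out holomorphic in $w$. For each fixed $w$ the kernel $K_{s,\H}(\cdot,w)$ lies in $\mathcal{A}^p_s(\H)$ with $\mathcal{A}^p_s$-norm controlled by $e^{s\|w\|^2/2}$ (up to a polynomial factor coming from the term $1+\tfrac{2s}{n-1}z\bullet\bar w$); together with $|g(w)|\leq\|L\|\cdot\|K_{s,\H}(\cdot,w)\|_{p,s}$ this yields $g\in\mathcal{A}^\infty_s(\H)$ with the desired norm control. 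The identity $L(f)=\langle f,g\rangle_s$ will first be verified on finite linear combinations of reproducing kernels, where the reproducing formula applies directly, and then extended to all of $\mathcal{A}^p_s(\H)$ by density; uniqueness of $g$ follows because the linear span of the $K_{s,\H}(\cdot,w)$ is dense in $\mathcal{A}^p_s(\H)$.

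The main obstacle is that $\mathcal{A}^p_s(\H)$ is not locally convex for $p<1$, so neither a Hahn--Banach extension to $L^p_s(\H)$ nor a direct Fubini-type exchange of $L$ with the reproducing integral is automatically available. I expect to circumvent this either by first working on the dense subspace $\mathcal{A}^p_s(\H)\cap\mathcal{A}^2_s(\H)$, where the reproducing formula holds classically and $L$ can be extended by continuity, or by appealing to an atomic decomposition of $\mathcal{A}^p_s(\H)$ in normalized reproducing kernels in the spirit of \cite{Z2}. The secondary technical point — ensuring that the extra factor $e^{-s\|z\|^2}$ in the pairing does not destroy holomorphicity of the candidate $g$ — is what makes the specific pairing \eqref{equation1} the right one, and it is where the adaptation of the classical arguments of \cite{G,Z1} to the cone $\H$ is most delicate.
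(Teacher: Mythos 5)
Your proposal follows essentially the same route as the paper: the forward direction via the pointwise estimate $|f(z)|e^{-s\|z\|^2/2}\leq C\|f\|_{p,s}$ and the factorization $|f|=|f|^p|f|^{1-p}$ (which is exactly how the paper proves the continuous inclusion $\mathcal A_s^p\subset\mathcal A_s^1$ and then bounds the pairing by $\|f\|_1\|g\|_\infty$), and the reverse direction by setting $g(w)=G(\tilde K_s(\cdot,w))$, controlling $\|g\|_{\infty,s}$ through the $L^p$-norm estimate of the kernel (the paper's Lemma \ref{lemma2}), and extending from the span of reproducing kernels by density. The approach and the key lemmas invoked coincide with the paper's proof.
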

The starting point of the proof of theorem B is the estimate of the reproducing kernel $\tilde K_{s,\H}$ of $\mathcal E^2_s(\H)$. 
\subsection{Estimate reproducing kernel}
\begin{lemma}\label{lemma2}
Suppose $s>o$ and $0<p\leq 1$. We have
\begin{equation}\label{equation2}
\int_\H |\tilde K_{s,\H}(z,w)|^pw_{s,p}(w)\leq Ce^{sp\|z\|^2/2} 
\end{equation}
where $C$ is a constant.
\end{lemma}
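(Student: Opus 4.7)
The plan is to reduce the $L^p$-estimate of $\tilde K_{s,\H}$ to the integration of a Gaussian in $z-w$ on $\H$, using the identity $2\,\mathrm{Re}(z\bullet\bar w)=\|z\|^2+\|w\|^2-\|z-w\|^2$.

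First, I would derive an explicit form for $\tilde K_{s,\H}$. By Lemma~\ref{lemma1} with $p=2$, the operator $T_2$ is a unitary isomorphism from $F_s^2$ onto $\mathcal{E}_s^2(\H)$ acting by multiplication with a constant multiple of $z_{n+1}$. Combined with the closed-form expression for the ambient kernel $K_{s,\H}$, this should yield
$$\tilde K_{s,\H}(z,w)=c_n\,z_{n+1}\overline{w_{n+1}}\,P(z\bullet\bar w)\,e^{c\,z\bullet\bar w},$$
where $P$ is a polynomial of low degree and the exponential scaling $c$ is calibrated by the weight $w_{s,2}$ via the reproducing formula.

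Next, inserting this formula and applying the identity above produces a pointwise bound of the form
$$|\tilde K_{s,\H}(z,w)|^p w_{s,p}(w)\leq C\bigl|z_{n+1}\overline{w_{n+1}}P(z\bullet\bar w)\bigr|^p\,e^{sp\|z\|^2/2}\,e^{-sp\|z-w\|^2/2},$$
under the correct calibration that matches the weight $w_{s,p}$ to the kernel. The residual factor $e^{-sp\|z-w\|^2/2}$ is an integrable Gaussian on $\H$ with respect to $\alpha\wedge\bar\alpha$ (bounded, in particular, by its integral on the ambient $\C^{n+1}$), and the polynomial factors $|z_{n+1}|^p|\overline{w_{n+1}}|^p(1+\|z\|\|w\|)^p$ are absorbed by a marginal reduction of $s$ in the Gaussian dominance. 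This yields $\int_\H |\tilde K_{s,\H}(z,w)|^p w_{s,p}(w)\leq Ce^{sp\|z\|^2/2}$, which is the desired estimate. The radial formula (2.1) can be used to make the Gaussian integration explicit, reducing it to a one-dimensional integral in $r=\|w\|$.

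The main obstacle I foresee is the precise identification of the exponential scaling $c$ in $\tilde K_{s,\H}$ through $T_2$, which must match $w_{s,p}$ for the separation of variables via the $\|z\|^2+\|w\|^2-\|z-w\|^2$ identity to produce exactly $e^{sp\|z\|^2/2}$ rather than an off-by-a-power-of-$s$ bound. A secondary technical point is handling the Gaussian integration on the codimension-one cone $\H$; if the naive estimate is too lossy near $w=z$, Lemma~\ref{lemmapw1} can be invoked to replace $\|w\|^2$ by a harmonic function on small disks $D_n(z)$, which restores the local mean-value structure needed to conclude.
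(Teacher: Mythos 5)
There is a genuine gap at the step where you claim the polynomial factors ``are absorbed by a marginal reduction of $s$ in the Gaussian dominance.'' The prefactor of the kernel is $\bigl(1+\tfrac{2s}{n-1}|z\bullet\bar w|\bigr)^p$, and on the essential support of the Gaussian $e^{-sp\|z-w\|^2/2}$ one has $\|w\|\approx\|z\|$, so this factor is of size $(1+\|z\|^2)^p$ --- it depends on $z$ alone and cannot be traded against $\|z-w\|$. Shrinking $s$ only absorbs the part growing in $\|z-w\|$; what survives is the bound $C(1+\|z\|^2)^p e^{sp\|z\|^2/2}$, which is strictly weaker than the statement of the lemma (and the loss matters downstream: in the proof of Theorem B the sharp bound is exactly what puts $g(w)=G(\tilde K_s(\cdot,w))$ into $\mathcal E_s^\infty(\H)$). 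The paper gets the exact compensation by a different mechanism: it splits $|K_{s,\H}|^p$ into the two factors $I_s$ and $J_s$, expands the exponential in series, and uses the Mengotti--Youssfi orthogonality relation (\ref{needequa}) on $\partial\M$, which produces the extra decay $(2k+n-1)^{-1}$ term by term; this yields $J_s(z)\approx e^{ps\|z\|^2/2}/(1+2ps\|z\|^2)$ and $I_s(z)\leq C(1+2ps\|z\|^2)e^{ps\|z\|^2/2}$, and the polynomial factors cancel in $\sqrt{I_sJ_s}$.

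Your Gaussian route can in principle be repaired, but not by the devices you list. The saving grace is the measure: by (\ref{intMY}), $\alpha\wedge\bar\alpha$ carries the density $r^{2n-3}\,dr\,d\mu$, i.e.\ it is comparable to $\|w\|^{-2}$ times the Riemannian surface measure of $\H$, so one actually has the decay $\int_\H e^{-c\|z-w\|^2}\,\alpha(w)\wedge\overline{\alpha(w)}\leq C(1+\|z\|)^{-2}$, and then $(1+\|z\|^2)^p(1+\|z\|)^{-2}\leq C$ precisely because $p\leq 1$. You would need to state and prove this decay; mere integrability is not enough, and your parenthetical justification (``bounded by its integral on the ambient $\C^{n+1}$'') is not valid --- a surface integral over a codimension-one complex submanifold is not controlled by an ambient volume integral. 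Two further remarks: the closed form you posit for $\tilde K_{s,\H}$ is not quite what comes out (the correct reduction, used in the paper, is $\tilde K_{s,\H}=\tfrac12(K^1_{s,\H}-K^2_{s,\H})$ via the two local inverses of the degree-two proper map $F$, followed by $|a-b|^p\leq|a|^p+|b|^p$ for $p\leq1$, which is all one needs); and Lemma~\ref{lemmapw1} plays no role here --- it is the ingredient for the pointwise estimate Lemma~\ref{lemmapw}, not for the kernel estimate.
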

\begin{proof}
If $P_r : \C^{n+1} \to \C^n$ is defined by
$P_r(z_1 ,\cdots, z_n , z_{n+1} ) = (z_1 ,\cdots, z_n )$,
and $F = P r_{|\H}$, then $F : \H \to \C^n\setminus \{0\}$ is a proper holomorphic mapping
of degree 2. We denote by $W$ the branching locus of $F$. The image $F (W )$
of $W$ under $F$ is an analytic subset of $\C^n \setminus\{0\}$. We set $V=F(W)\cup\{0\}$.
The local inverse $\phi$ and $\varphi$ of $F$ are given for $z\in \C^n\setminus V$
by
$$\phi(z)=(z,i\sqrt{z\bullet z})$$
$$\varphi(z)=(z,-i\sqrt{z\bullet z})$$
so that
$$\tilde K_{s,\H}(z,w)=\frac{K_{s,\H}^1(z,w)-K_{s,\H}^2(z,w)}{2}$$
where 
$$K_{s,\H}^1(z,w)=\frac{\bar w_{n+1}}{\overline{\phi_{n+1}(F(w))}}K_{s,\H}(z, \phi(F (w)))$$
$$K_{s,\H}^2(z,w)=\frac{\bar w_{n+1}}{\overline{\phi_{n+1}(F(w))}}K_{s,\H}(z, A(\phi(F (w))))$$
and $A$ is the transformation defined on $\C^{n+1}$ by
$$A(z_1,\cdots,z_{n+1})=(z_1,\cdots,z_n,-z_{n+1}).$$
When $0<p\leq 1$, note that
$$\int_\H |\tilde K_{s,\H}(z,w) |^pw_{s,p}(w)\leq C\int_\H |K_{s,\H}(z,w) |^pw_{s,p}(w).$$
So the desired inequality become
$$\int_\H |K_{s,\H}(z,w) |^pw_{s,p}(w)\leq Ce^{sp\|z\|^2/2}.$$
Let us now denote $I_s$ and $J_s$ the following integrals
$$I_s(z)=\int_\H \left|\left(1+\frac{2s}{n-1}z\bullet\bar w\right)e^{-\frac{s}{2}z\bullet\bar w}\right|^{2p}w_{s,p}(w)$$
and
$$J_s(z)=\int_\H \left|e^{-\frac{s}{2}z\bullet\bar w}\right|^{2p}w_{s,p}(w).$$
In \cite{MY}, Mengotti and Youssfi proved 
\begin{equation}\label{needequa}
\int_\M p_k(\xi)(z\bullet\bar\xi)^ld\mu(\xi)=
\left\{
\begin{array}{ll}
\frac{k!(n-1)!}{(k+n-2)!(2k+n-1)}p_k(z)&if\,\, l=k\\
0 &else

\end{array}
\right.
\end{equation}
where $n$, $k\in\N$ and $p_k$ is a homogeneous polynomial of degree $k$ on $\H$. Then binomial series expansion and (\ref{needequa}) give that
\begin{eqnarray*}
 J_s(z)&=&\int_\H \left|\sum_{k=0}^{+\infty}\frac{(ps)^k}{2^k k!}(z\bullet\bar w)^k
 \right|^2w_{s,p}(w)\\
 &\approx&\sum_{k=0}^{+\infty}\frac{(ps)^{2k}}{(2^k k!)^2}\int_0^{+\infty}r^{2k+2n-3}
 e^{-\frac{psr^2}{2}}dr\int_{\partial\M} |z\bullet\bar\xi|^{2k}d\mu(\xi)\\
  &\approx&\sum_{k=0}^{+\infty}\frac{(ps)^{k-n+1}(n+k-2)!}{2^{k-n+2}(k!)^2}\int_{\partial\M}|z\bullet\bar\xi|^{2k}d\mu(\xi)\\
 &\approx&\sum_{k=0}^{+\infty}\frac{(\frac{ps\|z\|^2}{2})^k}{k!(2k+n-1)}\int_{\partial\M} |z\bullet\bar\xi|^{2k}d\mu(\xi)\\
 &\approx&\frac{e^{\frac{ps\|z\|^2}{2}}}{1+2ps\|z\|^2}.
\end{eqnarray*}
Similary,
\begin{eqnarray*}
 I_s(z)&\leq &C\int_0^{+\infty}\left[ 1+\left(\frac{2sr}{n-1}\right)^p|z\bullet\bar\xi|^p\right]^2
 \left|\sum_{k=0}^{+\infty}({psr}/{2})^{k}
 \frac{(z\bullet\bar\xi)^k}{k!}\right|^2w_{s,p}(w)\\
 &\leq &C\int_0^{+\infty}\left[1+\left(\frac{2sr\|z\|}{n-1}\right)^p\right]^2\sum_{k=0}^{+\infty}({psr}/{2})^{2k}
 ({1}/{k!})^2r^{2k+2n-3}e^{-psr^2/2}dr\int_{\partial\M}|z\bullet\bar\xi|^{2k}d\mu(\xi)\\
  &\leq &C\sum_{k=0}^{+\infty}\frac{({ps}/{2})^{2k}\|z\|^{2k}}{k!(k+n-2)!(2k+n-1)}\int_0^{+\infty}
 (1+\left(\frac{2sr\|z\|}{n-1}\right)^p)^2r^{2k+2n-3}e^{-psr^2/2}dr\\
 &\leq &C\sum_{k=0}^{+\infty}\frac{({ps}/{2})^{2k}\|z\|^{2k}}{k!(k+n-2)!(2k+n-1)}\\
 &&\times\left(\int_0^{\frac{n-1}{2s\|z\|}}
  r^{2k+2n-3}e^{-psr^2/2}dr+2\left(\frac{2sr\|z\|}{n-1}\right)^{2p}\int_{\frac{n-1}{2s\|z\|}}^{+\infty}r^{2k+2n-3}e^{-psr^2/2}dr
  \right)\\
  &\leq &C\sum_{k=0}^{+\infty}\frac{({2}/{s})^{k+n-2}({ps}/{2})^{2k}\|z\|^{2k}}{k!(k+n-2)!(2k+n-1)}\\
  &&\times\left[(k+n-2)!+2\left(\frac{2s\|z\|}{n-1}\right)^{2p}({2}/{s})^p\Gamma(k+p+n-1)\right]\\
  &\leq &C(1+2ps\|z\|^2)e^{ps\|z\|^2/2}.
\end{eqnarray*}
Now using H\"older inequality we obtain that
\begin{eqnarray*}
\int_\H |K_{s,\H}(z,w) |^pw_{s,p}(w)&\leq&\sqrt{I_s(z)J_s(z)}\\
&\leq &Ce^{sp\|z\|^2/2}.
\end{eqnarray*}
This completes the proof of lemma.
\end{proof}
\subsection{Pointwise estimates}
In this section we give the natural growth of functions in $F_s^p$.

\begin{lemma}\label{lemmapw2}
	For any holomorphic function $F$ on $\H$ and $z\in\H$ we have $$|F(z)|\leq C\int_{D_{n+1}(z)\cap\H}|F(\zeta)|\alpha(\zeta)\wedge\overline{\alpha(\zeta)}$$
	where $C$ is a constant independing on $z$. 
\end{lemma}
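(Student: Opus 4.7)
The plan is to apply the sub-mean value property for holomorphic functions in local holomorphic coordinates on the smooth complex submanifold $\H \subset \C^{n+1}$, transfer the resulting Euclidean mean value back to an integral against $\alpha \wedge \overline{\alpha}$, and invoke the $O(n+1,\R)$-symmetry of the picture to secure a constant independent of $z$.

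First, at any $z_0 \in \H$ the defining equation $z_1^2 + \cdots + z_{n+1}^2 = 0$ has non-vanishing gradient $2z_0$, so the implicit function theorem supplies a local holomorphic parametrization $\psi : B(0,r) \subset \C^n \to \H$ with $\psi(0) = z_0$, obtained by solving for the coordinate $z_j$ with $|z_{0,j}|$ maximal. With $r$ small enough, $\psi(B(0,r)) \subset D_{n+1}(z_0) \cap \H$. Since $F \circ \psi$ is holomorphic on $B(0,r)$, the classical sub-mean value inequality for holomorphic functions gives
$$|F(z_0)| = |F \circ \psi(0)| \leq \frac{1}{|B(0,r)|} \int_{B(0,r)} |F(\psi(w))|\, dA(w).$$
Because $\alpha$ is a nowhere-vanishing holomorphic $(n,0)$-form on $\H$, the pull-back $\psi^*(\alpha \wedge \overline{\alpha})$ is a smooth strictly positive multiple of $dA$, so the change of variables $\zeta = \psi(w)$ yields the desired estimate with a constant $C = C(z_0)$.

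Next, to make $C(z_0)$ independent of $z_0$, I would exploit the $O(n+1,\R)$-action on $\C^{n+1}$, which preserves $\H$, the Euclidean norm, the ball $D_{n+1}(\cdot)$, and the form $\alpha \wedge \overline{\alpha}$; the last invariance holds because $\alpha$ is $SO(n+1,\C)$-invariant and any real orthogonal reflection alters both $\alpha$ and $\overline{\alpha}$ by the same sign. Since this action is transitive on each sphere $\{z \in \H : \|z\| = t\}$, the estimate need only be checked at one representative per value of $t$, say $z_0(t) = \frac{t}{\sqrt{2}}(1, i, 0, \ldots, 0)$. The complex dilation $z \mapsto \lambda z$ of $\H$ rescales $\alpha \wedge \overline{\alpha}$ by $|\lambda|^{2n}$ and $D_{n+1}(z_0)$ by the factor $|\lambda|$, so after coupling it with the $O(n+1,\R)$-symmetry one is reduced to a single reference configuration in which the Jacobian of $\psi$ and the inclusion $\psi(B(0,r)) \subset D_{n+1}(z_0) \cap \H$ are explicit and bounded.

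The main obstacle I foresee is precisely this uniform control of the constant: the chart $\psi$ depends on $z_0$, and near the branch locus of the projection $F : \H \to \C^n$ the naive chart $\zeta \mapsto (\zeta, i\sqrt{\zeta \bullet \zeta})$ degenerates. The remedy is to always pick the chart associated with the coordinate of $z_0$ of largest modulus, which yields $n+1$ overlapping charts whose domains cover $\H$, and then to invoke the symmetry and scaling argument above to collapse the family of reference Jacobians to finitely many.
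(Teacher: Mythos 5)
Your first two steps are sound and, frankly, more careful than what the paper does: parametrizing $\H$ locally by the implicit function theorem, applying the sub-mean value inequality to the plurisubharmonic function $|F\circ\psi|$ on a Euclidean ball, and pulling the average back through the nowhere-vanishing form $\alpha\wedge\overline{\alpha}$ does yield the stated bound with a constant $C(z_0)$. The gap is exactly where you feared it: the uniformity step does not close. First, the scaling exponent is off: each term of $\alpha$ carries $n$ differentials and one factor $1/z_j$, so $\delta_\lambda^*\alpha=\lambda^{n-1}\alpha$ and $\alpha\wedge\overline{\alpha}$ rescales by $|\lambda|^{2n-2}$, not $|\lambda|^{2n}$; equivalently, $\alpha\wedge\overline{\alpha}$ is comparable to the Riemannian volume of the $2n$-real-dimensional cone $\H$ divided by $\|\zeta\|^{2}$, as the factor $r^{2n-3}$ (rather than $r^{2n-1}$) in (\ref{intMY}) already shows. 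Second, and more seriously, $D_{n+1}(z)$ has a \emph{fixed} radius $R$, while the dilation carrying $z$ to the reference point of norm $1$ shrinks it to a ball of radius $R/\|z\|$; the family of ``reference configurations'' is therefore not finite, and the reference constant for radius $R/t$ blows up like $t^{2n}$ as $t\to\infty$.

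No rearrangement of the symmetry argument can repair this, because the inequality with a $z$-independent constant is false as stated: take $F\equiv 1$ and $\|z\|=t$. By (\ref{intMY}), $\int_{D_{n+1}(z)\cap\H}\alpha\wedge\overline{\alpha}=m_n\int_0^\infty r^{2n-3}\mu\{\xi\in\partial\M:|r\xi-z|<R\}\,dr\approx R^{2n}t^{-2}\to 0$, while the left-hand side is $1$. The correct uniform statement carries an extra factor $\|z\|^{2}$ on the right (equivalently, one should average against the normalized measure of $D_{n+1}(z)\cap\H$), and that is precisely what your local-chart argument proves once the Jacobian of $\psi$ is tracked through the scaling. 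For comparison, the paper's own proof goes a different way: it declares $F$ harmonic for the ambient Laplacian, applies the divergence theorem on $D_{n+1}(z)\cap\H$ to obtain the exact mean-value identity (\ref{intMY1}), and then invokes (\ref{intMY}); but this treats $\H$ as if it were invariant under the translations $\zeta\mapsto z+r\xi$ (in general $z+r\xi\notin\H$) and then applies a polar-coordinate formula centred at the origin to a ball centred at $z$, which is where the same normalizing volume factor $\approx\|z\|^{-2}$ disappears. So neither argument establishes the constant claimed in the statement; yours at least isolates the correct corrected inequality, and one should then check that the extra polynomial factor $\|z\|^{2}$ can be absorbed by the Gaussian weight in the application to Lemma \ref{lemmapw}.
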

\begin{proof}
	$F$ being holomorphic then $\Delta F(\zeta)=0$ where $\Delta=\sum_{i=1}^{n+1}\frac{\partial^2}{\partial\zeta_i{\partial\bar\zeta_i}}$.  The divergence theorem implies that
	\begin{align*}
		0=\int_{D_{n+1}(z)\cap\H}\Delta F(\zeta)dA(\zeta)&=\int_{\partial(D_{n+1}(z)\cap\H)}\frac{\partial F}{\partial\nu}(\zeta)d\mu(\zeta)\\
		&=\int_{\partial(D_{n+1}(0)\cap\H)}\frac{\partial F}{\partial r}(z+r\xi)d\mu(\xi)\\
		&=\frac{\partial}{\partial r}\int_{\partial(D_{n+1}(0)\cap\H)}F(z+r\xi)d\mu(\xi)
	\end{align*}
where $\frac{\partial}{\partial\nu}$ is the differentiation in the direction of the external normal. Since the mean value integral at $r=0$ is equal to $F(z)$ then 
\begin{equation}\label{intMY1}
	F(z)=\int_{\partial(D_{n+1}(0)\cap\H)}F(z+r\xi)d\mu(\xi)
\end{equation}
The proof of the lemma arises from (\ref{intMY1}) and (\ref{intMY}). 
\end{proof}

\begin{lemma}\label{lemmapw}
Let $0 < p < \infty$. Then for any holomorphic function $f$ on $\C^n$ we have
$$\|f\|_{\infty,s}\leq C\|f\|_{p,s}$$
where $C$ is a constant. 
\end{lemma}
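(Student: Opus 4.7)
The plan is to lift the estimate from $\C^n$ to the cone $\H$ via the isometry $T_p$ of Lemma~\ref{lemma1}, and then prove the analogous pointwise bound on $\H$ by combining Lemma~\ref{lemmapw2} with Lemma~\ref{lemmapw1}.

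First I set $F=T_pf$, which is holomorphic on $\H$. Using $|z_{n+1}|=\rho(w)$ and $\|z\|^2=N_*(w)^2$ for $z=(z_1,\ldots,z_{n+1})\in\H$ with $w=(z_1,\ldots,z_n)$, a direct computation yields
$$|F(z)|\,e^{-s\|z\|^2/2}=C(p)^{1/p}\,|f(w)|\,\rho(w)\,e^{-sN_*(w)^2/2}=C(p)^{1/p}\,|f(w)|e^{-\phi(w)}.$$
Taking suprema, $\|F\|_{L^\infty_s(\H)}=C(p)^{1/p}\|f\|_{\infty,s}$, which combined with the $L^p$ isometry \eqref{AshAn0} reduces the claim to
$$\|F\|_{L^\infty_s(\H)}\,\leq\,C\,\|F\|_{L^p_s(\H)}\qquad\text{for every holomorphic }F\text{ on }\H.$$

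Next, I fix $z\in\H$ and apply Lemma~\ref{lemmapw1} in $\C^{n+1}$ to obtain a pluriharmonic function $h_z$ with $h_z(z)=0$ and $\bigl|\,\|\zeta\|^2-\|z\|^2-h_z(\zeta)\,\bigr|\leq A$ on $D_{n+1}(z)$; concretely, $h_z(\zeta)=2\Re\bigl(\bar z\bullet(\zeta-z)\bigr)=\Re H_z(\zeta)$ with $H_z$ holomorphic. I then form
$$u(\zeta)\,=\,|F(\zeta)|^p\,e^{-sp\,h_z(\zeta)/2}\,=\,\bigl|F(\zeta)\,e^{-spH_z(\zeta)/2}\bigr|^p,$$
which is the $p$th power of the modulus of a holomorphic function on $\H\cap D_{n+1}(z)$, hence subharmonic there. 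Repeating the divergence-theorem argument of Lemma~\ref{lemmapw2} with the equality $\Delta F=0$ replaced by the inequality $\Delta u\geq 0$ shows that the spherical means of $u$ are non-decreasing in $r$, so the sub-mean-value property
$$u(z)\,\leq\,C\int_{D_{n+1}(z)\cap\H}u(\zeta)\,\alpha(\zeta)\wedge\overline{\alpha(\zeta)}$$
holds.

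Finally, since $h_z(z)=0$ the left-hand side equals $|F(z)|^p$, while Lemma~\ref{lemmapw1} gives $e^{-sph_z(\zeta)/2}\leq e^{spA/2}\,e^{sp\|z\|^2/2}\,e^{-sp\|\zeta\|^2/2}$ on $D_{n+1}(z)$. Enlarging the integration domain to all of $\H$ and using that $w_{s,p}$ is a positive constant multiple of $e^{-sp\|\zeta\|^2/2}$ yields
$$|F(z)|^p\,e^{-sp\|z\|^2/2}\,\leq\,C'\,\|F\|^p_{L^p_s(\H)};$$
taking $p$th roots, the supremum over $z\in\H$, and pulling back through $T_p$ completes the argument. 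The main obstacle I foresee is the sub-mean-value step on the submanifold $\H$: one has to check carefully that the divergence-theorem identity underlying \eqref{intMY1} passes from $\Delta F=0$ to $\Delta u\geq 0$ with respect to the Laplacian induced on $\H$. Once that subharmonicity-on-$\H$ issue is settled, the construction of $u$ via the holomorphic lift $H_z$ makes all the remaining estimates routine.
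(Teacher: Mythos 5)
Your proposal is correct and follows essentially the same route as the paper: pass to $\H$ via the isometry $T_p$, multiply by the holomorphic lift $e^{-spH_z/2}$ of the harmonic function $h_z$ from Lemma~\ref{lemmapw1}, apply the sub-mean-value inequality of Lemma~\ref{lemmapw2}, and undo the weight using the bound $|\,\|\zeta\|^2-\|z\|^2-h_z(\zeta)|\leq A$. The only difference is that you explicitly flag (and correctly resolve) the point the paper uses silently, namely that the mean-value argument of Lemma~\ref{lemmapw2} must be upgraded from holomorphic $F$ to the subharmonic function $|Fe^{-spH_z/2}|^p$.
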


\begin{proof}
Let $H_z$ a holomorphic function on $\H$ such that $h_z=\Re eH_z$. Then
$$|T_pf(z)e^{-s|z|^2}|^p=|T_pf(z)e^{-sH_z(z)}|^pe^{-sp|z|^2}$$
From Lemma \ref{lemmapw2} we have that
$$
	|T_pf(z)e^{-s|z|^2}|^p\leq C\int_{D_{n+1}(z)\cap\H}|T_pf(w)e^{-sH_z(w)-s|z|^2}|^p\alpha(w)\wedge\overline{\alpha(w)}
$$
Also, from Lemma \ref{lemmapw1} we obtain that.
\begin{equation}\label{AshAn}
	|T_pf(z)e^{-s|z|^2}|^p\leq C\int_{\H}|T_pf(w)e^{-s|w|^2}|^p\alpha(w)\wedge\overline{\alpha(w)}
\end{equation}
for all $z\in\H$.
Finally the lemma arises from (\ref{AshAn}) combined with the estimate (\ref{AshAn0}) at the point $(z,i\sqrt{z\bullet z})\in\H$ where $z\in\C^n$.
\end{proof}
\subsection{Inclusion}
\begin{lemma}\label{lemma4}
Suppose $0<p\leq 1$. Then $\mathcal E_s^p(\H)\subset\mathcal E_s^1(\H)$ and the inclusion is continuous. 
\end{lemma}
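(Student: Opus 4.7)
The plan is to reduce the inclusion on $\H$ to the analogous inclusion $F_s^p\subset F_s^1$ on $\C^n$ via the isometry $T_p$, and then establish the latter by a standard factorization trick that exploits the pointwise growth estimate already proved in Lemma \ref{lemmapw}.

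First, I would unpack the definitions. By Lemma \ref{lemma1}, any $g\in\mathcal{E}_s^p(\H)$ is of the form $g=T_pf$ for a unique $f\in F_s^p$, with $\|g\|_{L^p_s(\H)}=\|f\|_{p,s}$. Because $T_pf$ and $T_1f$ differ only by the scalar $C(p)^{1/p}/C(1)$, showing $g\in\mathcal{E}_s^1(\H)$ is equivalent to showing $f\in F_s^1$; and the continuity of the inclusion $\mathcal{E}_s^p(\H)\hookrightarrow\mathcal{E}_s^1(\H)$ is equivalent to the estimate $\|f\|_{1,s}\leq C\|f\|_{p,s}$ for some constant independent of $f$. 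So it suffices to prove this last inequality for entire $f$ on $\C^n$ with $0<p\leq 1$.

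Next, I would carry out the factorization. Writing $|f(z)e^{-\phi(z)}|=|f(z)e^{-\phi(z)}|^{1-p}\cdot|f(z)e^{-\phi(z)}|^{p}$ and pulling the first factor out of the integral gives
\begin{equation*}
\|f\|_{1,s}=\int_{\C^n}|f(z)e^{-\phi(z)}|\,\rho(z)^{-2}\,dA(z)\leq \|f\|_{\infty,s}^{\,1-p}\int_{\C^n}|f(z)e^{-\phi(z)}|^{p}\,\rho(z)^{-2}\,dA(z)=\|f\|_{\infty,s}^{\,1-p}\,\|f\|_{p,s}^{\,p}.
\end{equation*}
Invoking the pointwise estimate from Lemma \ref{lemmapw}, namely $\|f\|_{\infty,s}\leq C\|f\|_{p,s}$, yields $\|f\|_{1,s}\leq C^{1-p}\|f\|_{p,s}$, which is exactly the desired control. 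Translating back through $T_p$ and $T_1$, this gives $\|g\|_{L^1_s(\H)}\leq C'\|g\|_{L^p_s(\H)}$ for all $g\in\mathcal{E}_s^p(\H)$, so $\mathcal{E}_s^p(\H)\subset\mathcal{E}_s^1(\H)$ continuously.

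There is no substantive obstacle beyond bookkeeping: the analytic heart of the argument is already contained in Lemma \ref{lemmapw}, and the $L^\infty$-to-$L^1$ comparison is the classical quasi-Banach trick. The only point that requires mild care is making sure that the pointwise bound, stated for entire functions on $\C^n$, transfers correctly to elements of $\mathcal{E}_s^p(\H)$; this is handled cleanly by the isometry property $\|T_pf\|_{L^p_s(\H)}=\|f\|_{p,s}$ from Lemma \ref{lemma1}, which makes the two sides of the equivalence literally the same quantity up to an explicit constant.
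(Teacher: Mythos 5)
Your proposal is correct and follows essentially the same route as the paper: reduce to the embedding $F_s^p\subset F_s^1$ on $\C^n$ via the isometry $T_p$ from Lemma \ref{lemma1}, then write $|fe^{-\phi}|=|fe^{-\phi}|^{1-p}|fe^{-\phi}|^{p}$ and control the first factor by the pointwise bound $\|f\|_{\infty,s}\leq C\|f\|_{p,s}$ of Lemma \ref{lemmapw}. The only cosmetic difference is that you pull out $\|f\|_{\infty,s}^{1-p}$ directly while the paper substitutes the pointwise bound inside the integrand before integrating; the resulting estimate $\|f\|_{1,s}\leq C^{1-p}\|f\|_{p,s}$ is identical.
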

\begin{proof}
The starting of the proof is the embedding $F_s^p\subset F_s^1$. Because the desired embedding follows by using 
the isometric $T_p$. So for any $f\in F^p_s$ the Lemma \ref{lemmapw} yields
\begin{eqnarray*}
 \|f\|_{s,1}&\leq&\int_{\C^n}|f(z)|e^{-sN_*^2(z)/2}|z\bullet z|^{-\frac{1}{2}}dA(z)\\
 &\leq&\int_{\C^n}|f(z)|^p|f(z)|^{1-p}e^{-sN_*^2(z)/2}|z\bullet z|^{-\frac{1}{2}}dA(z)\\
 &\leq C&\int_{\C^n}|f(z)|^p(e^{sN_*^2(z)/2}|z\bullet z|^{-\frac{1}{2}}\|f\|_{p,s})^{1-p}
 e^{-sN_*^2(z)/2}|z\bullet z|^{-\frac{1}{2}}dA(z)\\
 &\leq C&\|f\|_{p,s}
\end{eqnarray*}
This proves the desired embedding.
\end{proof}
\section{Proof of the theorem B}
\begin{proof}[Proof of the theorem B]
Consider the bilinear form $L$ defined on $\mathcal E_s^p (H)\times \mathcal E_s^\infty$ (H) by
$$(f, g) \to L g (f ):= L(f, g) =\int_{\H} f (z)\overline{g(z)}w_{2,s}(z).$$ This mapping is well-defined. 
Namely, the Lemma \ref{lemma4} gives that
\begin{eqnarray*}
 |L g (f )|&\leq&\|f\|_{\mathcal E_s^1(\H)}\|g\|_{\mathcal E_s^\infty(\H)}\\
 &\leq&\|f\|_{\mathcal E_s^p(\H)}\|g\|_{\mathcal E_s^\infty(\H)}
\end{eqnarray*}
for all $(f,g)\in \mathcal E_s^p(\H)\times \mathcal E_s^\infty(\H)$. Conversely, if $G$ is a bounded linear
functional on $\mathcal E_s^p (H)$ we must find $g\in\mathcal E^\infty_s (H)$ verifying
$$G(f)=\int_{\H} f (z)\overline{g(z)}w_{2,s} (z)$$
for all $f\in \mathcal E_s^p(H)$. For this goal we choose 
$$g(w)=G(\tilde K_s(w,z)),\,\,w\in\H$$ 
 where $\tilde K_s$ is the Bergman 
kernel of $\mathcal E_s^2(\H)$. Let us prove that $g$ is the desired function. First, we observe that. 
\begin{eqnarray*}
 |g(w)|&\leq&\|G\|\|\tilde K_s(\cdot,w)\|_{\mathcal E_s^p(\H)}\\
 &\leq C&\|G\|e^{s\|w\|^2/2}
\end{eqnarray*}
and thus $\|g\|_{\mathcal E_s^\infty(\H)}\leq C \|G\|$. Second, let us show the following.
\begin{equation}\label{eqqq}
 G(f)=\int_{\H} f (z)\overline{g(z)}w_{2,s} (z)
\end{equation}
for all $f\in \mathcal E_s^p(H)$. To do that we can observe by reproducing property that (\ref{eqqq}) is true for
$f(z)=\tilde K_s(z,w)$. Moreover the set of all finite linear combinations of reproducing kernel functions 
being dense in
$\mathcal E_s^p(H)$ and $\mathcal E_s^p(H) \subset \mathcal E^1_s(H) \subset \mathcal E^2_s(H)$ then (\ref{eqqq}) 
is true. This completes the proof of the theorem B.
\end{proof}

\section{Proof of the main theorem}
\begin{proof}
Consider the bilinear form $L$ defined on $F^p_s\times F^\infty_s$ by
$$(f, g) \to L g (f ):= L(f, g) =\int_{\H} T_p f (z)\overline{Tg(z)}w_{2,s}(z)$$
where $T g(z_1 ,\cdots , z_{n+1}) = z_{n+1} g(z_1 ,\cdots, z_n )$. The functional $L$ is well-defined. Indeed 
Lemma \ref{lemma4} yields 
\begin{eqnarray*}
 |L_g(f)|&\leq C&\|T_pf\|_{\mathcal E^1_s(H)}\|Tg\|_{\mathcal E_s^\infty(H)}\\
 &\leq C&\|T_pf\|_{\mathcal E^p_s(H)}\|Tg\|_{\mathcal E_s^\infty(H)}\\
 &\leq C&\|f\|_{p,s}\|g\|_{p,s}
 \end{eqnarray*}
for all $(f, g)\in F^p_s\times F_s^\infty$. Conversely, if G is a bounded linear functional on $F^p_s$ then 
$G \circ T_p^{-1}$ is in the dual space of $\mathcal E^p_s (H)$.
Hence, from Theorem \ref{theob} there exits $h\in \mathcal E^\infty_s(H)$ such that
$$G \circ T_p^{-1}(\tilde{h})=\int_{\H}\tilde{h}(z)\overline{h(z)}w_{2,s}(z)$$
for all $\tilde{h} \in \mathcal E^p_s(H)$. Finally for $g = T^{-1}h$ we get that.
\begin{eqnarray*}
 G(f)&=&G \circ T_p^{-1}(T_pf)\\
 &=&\int_{\H} T_p f (z)\overline{Tg(z)}w_{2,s}(z)
\end{eqnarray*}
for all $f\in F^p_s$. This completes the proof of the main theorem .
\end{proof}

\bibliographystyle{plain}

\end{document}